\newcommand{\N}{\mathbb N}
\newcommand{\Z}{\mathbb Z}
\newcommand{\R}{\mathbb R}
\newtheorem{theorem}{Theorem}[section]
\newtheorem{problem}[theorem]{Problem}
\newtheorem{example}[theorem]{Example}
\newtheorem{lemma}[theorem]{Lemma}
\theoremstyle{remark}
\newtheorem{remark}[theorem]{Remark}
\begin{document}
\bibliographystyle{plain}
\title{A bound on degrees of primitive elements of toric graph ideals}
\author{Kamil Rychlewicz}
\address{Faculty of Mathematics, Informatics and Mechanics, University of Warsaw, Banacha 2, 02-097 Warsaw, Poland}
\email{kr360316@students.mimuw.edu.pl}
\begin{abstract}
 We prove that for any toric ideal of a graph the degree of any element of Graver basis is bounded above by an exponential function of the maximal degree of a circuit.
\end{abstract}

\maketitle 

\section{Introduction}
Let $a_1,a_2,\dots,a_m\in \Z^n$ be the columns of $n\times m$ matrix $A$ and let $K$ be a field. We consider a homomorphism of $K$-algebras $\phi:K[x_1,x_2,\dots,x_m]\to K[y_1,y_2,\dots,y_n,y_1^{-1},y_2^{-1},\dots,y_n^{-1}]$ defined by $\phi(x_i)=y^{a_i}$, where by definition $y^{(s_1,s_2,\dots,s_n)}=y_1^{s_1} y_2^{s_2}\dots y_n^{s_n}$. Then the \emph{toric ideal} $I_A$ of matrix $A$ is the kernel of $\phi$. We define the \emph{$A$-degree of monomial}:
$$\deg_A\left(x_1^{u_1}x_2^{u_2}\dots x_m^{u_m}\right)=u_1 a_1+u_2 a_2+\dots+u_m a_m\in\Z^n.$$
\begin{theorem}
\label{gene}
For any matrix $A$, $I_A$ is generated by binomials of the form $x^u-x^v$ for which $\deg_A(x^u)=\deg_A(x^v)$.
\end{theorem}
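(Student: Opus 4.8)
The plan is to compare $I_A$ with the ideal $J$ generated by all binomials $x^u-x^v$ satisfying $\deg_A(x^u)=\deg_A(x^v)$, and to prove that $I_A=J$. The inclusion $J\subseteq I_A$ is immediate: since $\phi(x^u)=y^{\deg_A(x^u)}$, any such binomial satisfies $\phi(x^u-x^v)=y^{\deg_A(x^u)}-y^{\deg_A(x^v)}=0$, so it lies in the kernel $I_A$. The substance of the argument is the reverse inclusion.

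For $I_A\subseteq J$ the key device is the $\Z^n$-grading of $K[x_1,\dots,x_m]$ in which the monomial $x^u$ is assigned the degree $\deg_A(x^u)\in\Z^n$; this is a well-defined group grading because $\deg_A$ is additive on monomials. I would first show that $I_A$ is homogeneous for this grading. Given $f=\sum_u c_u x^u\in I_A$, I group its finitely many terms by $A$-degree, writing $f=\sum_d f_d$ with $f_d=\sum_{\deg_A(x^u)=d} c_u x^u$. Applying $\phi$ yields
$$\phi(f)=\sum_d\left(\sum_{\deg_A(x^u)=d} c_u\right)y^d,$$
and since distinct Laurent monomials $y^d$ are $K$-linearly independent, $\phi(f)=0$ forces $\phi(f_d)=0$ for every $d$. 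Hence each homogeneous component $f_d$ already lies in $I_A$, and it suffices to prove $f_d\in J$ for an arbitrary $A$-homogeneous element.

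So let $f=\sum_i c_i x^{u_i}\in I_A$ be $A$-homogeneous of degree $d$, meaning $\deg_A(x^{u_i})=d$ for all $i$. The vanishing $\phi(f)=\left(\sum_i c_i\right)y^d=0$ says precisely that $\sum_i c_i=0$. Fixing one monomial $x^{u_1}$, I then rewrite
$$f=\sum_i c_i x^{u_i}=\sum_i c_i\left(x^{u_i}-x^{u_1}\right)+\left(\sum_i c_i\right)x^{u_1}=\sum_i c_i\left(x^{u_i}-x^{u_1}\right).$$
Each summand $x^{u_i}-x^{u_1}$ is a binomial whose two monomials share the $A$-degree $d$, hence belongs to $J$; therefore $f\in J$. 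Combining this with the grading reduction gives $I_A\subseteq J$, and thus $I_A=J$, which is the assertion.

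I expect the only genuine obstacle to be the homogeneity step: recognizing that the right tool is the $\Z^n$-grading by $A$-degree and verifying, through the linear independence of the monomials $y^d$, that $I_A$ respects it. Once homogeneity is established, the passage from a homogeneous relation to a $K$-linear combination of binomials is a routine consequence of the coefficient sum being zero, and the rest of the proof is bookkeeping.
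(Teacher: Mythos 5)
Your proof is correct and complete. Note that the paper does not prove this statement itself; it only cites Lemma 4.1 of Sturmfels' \emph{Gr\"obner Bases and Convex Polytopes}. The argument given there is different in flavor from yours: Sturmfels fixes a term order, supposes some $f\in I_A$ lies outside the span of the binomials in question with $\mathrm{in}(f)=x^u$ minimal among all counterexamples, observes that since $\phi(f)=0$ the image $y^{Au}$ of the leading term must cancel against the image of some other monomial $x^v$ of $f$ (so $\deg_A(x^u)=\deg_A(x^v)$), and subtracts a multiple of $x^u-x^v$ to produce a counterexample with smaller leading term, a contradiction. Your route via the $\Z^n$-grading by $A$-degree avoids term orders entirely: the linear independence of the Laurent monomials $y^d$ shows at once that $I_A$ is $A$-homogeneous, and the identity $f_d=\sum_i c_i\bigl(x^{u_i}-x^{u_1}\bigr)$ for a homogeneous component with $\sum_i c_i=0$ finishes the argument. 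Both proofs in fact establish the stronger statement that these binomials span $I_A$ as a $K$-vector space, not merely generate it as an ideal; yours is arguably the more conceptual of the two, while the term-order proof has the advantage of slotting directly into Gr\"obner basis machinery used elsewhere in that reference. Either is a legitimate proof of the theorem as stated.
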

For the proof, see e.g. Lemma 4.1. in \cite{poly}. For a binomial as above, we define $\deg_A(x^u-x^v)=\deg_A(x^u)$.

For any $u\in\ker A\subset \Z^n$ we can consider a binomial $x^{u^+}-x^{u^-}\in I_A$ where $u^+,u^-\in\N^n$ are the unique vectors satysifying $u^+-u^-=u$ and $supp(u^+)\cap supp(u^-)=\emptyset$. In fact, every irreducible binomial $x^{u^+}-x^{u^-}$ has this form. An irreducible binomial $x^{u^+}-x^{u^-}\in I_A$ is called \emph{primitive} if there exists no other binomial $x^{v^+}-x^{v^-}\in I_A$ with $x^{v^+}|x^{u^+}$ and $x^{v^-}|x^{u^-}$. All primitive binomials of $I_A$ constitute its \emph{Graver basis} (see \cite{poly}). An irreducible binomial $x^u-x^v\in I_A$ is called a \emph{circuit} if its support $supp(x^u-x^v)=\left\{i: x_i|x^{u+v}\right\}$ is minimal (with respect to inclusion) among binomials in $I_A$.

Sturmfels conjectured that the degree of any primitive binomial of a toric ideal is bounded above by the maximal degree of a circuit. It was however disproved by Hosten and Thomas (see Example 4.7 in \cite{sturm}). It led to another conjecture (Conjecture 4.8 ibid.): the degree of any primitive binomial is bounded above by the maximal \emph{true degree} of a circuit. The true degree of a circuit $C\in I_A$ is defined as $\deg(C)\cdot index(C)$, where $index(C)$ is the index of the lattice $\Z(a_i: i \in supp(C))$ in $\R(a_i: i\in supp(C))\cap \Z(a_1,a_2,\dots,a_m)$. In \cite{thgrob}
Tatakis and Thoma disproved the conjecture and in \cite{thgrav}
they proved that there is no polynomial bound. They provide counterexamples of toric graph ideals for which the Graver degrees are exponentialy large compared to the true circuit degrees.
A natural question arises: are Graver degrees bounded by \emph{any} function of the maximal true degree of a circuit? The search for counterexamples among graph ideals fails this time and we prove that for toric graph ideals the Graver basis degrees are bounded by an exponential function of maximal true circuit degrees. It's now known that for toric graph ideals the true degree of a circuit is equal to its usual degree (see Theorem 3.1 in \cite{thgrav}), thus we have to bound the degrees of primitive elements by an exponential function of the maximal usual degree of a circuit. This is done in Theorem \ref{final}.

\section{Graver bases and circuits in toric graph ideals}
Let $G$ be a finite simple undirected graph. A \emph{walk} in $G$ is a sequence 
$$(\{v_1,v_2\},\{v_2,v_3\},\dots,\{v_k,v_{k+1}\})$$
of edges of $G$. It is called a \emph{closed walk} if $v_1=v_{k+1}$. If it's closed and $v_1$, $v_2$, \dots, $v_k$ are pairwise distinct, then it is a \emph{cycle} and if $v_1$, $v_2$, \dots, $v_k$, $v_{k+1}$ are pairwise distinct, it's called a \emph{path}. The number of edges in the walk is called its \emph{length} and the walk is \emph{even} (respectively \emph{odd}) if its length is even (respectively odd). An edge (respectively a vertex) of $G$ is called a \emph{cut edge} (respectively a \emph{cut vertex}) if its removal increases the number of connected components of $G$.

Let $a_1,a_2,\dots,a_m\in \Z^n$ be the columns of $n\times m$ matrix $A$. If every $a_j$ for $j=1,2,\dots,m$ is a 0--1 vector and it has exactly two ones, then $A$ is an incidence matrix of some graph $G=(V,E)$ for $V=(v_1,v_2,\dots,v_n)$ and $E=(e_1,e_2,\dots,e_m)$. Then we define the \emph{toric ideal of graph} $G$ as $I_G=I_A\in K[e_1,e_2,\dots,e_m]$. It follows from Theorem \ref{gene} that $I_G$ is generated by elements of the form
$$B_w=\prod_{k=1}^q e_{i_{2k-1}}-\prod_{k=1}^q e_{i_{2k}}$$
for all even closed walks $w=(e_{i_1},e_{i_2},\dots,e_{i_{2q}})$ in graph $G$.
The degree of $B_w$ equals $q$, which is the half of the length of $w$. 

\begin{example}
 If $G$ is a 4-cycle, then its toric ideal is principal generated by element $x_{12}x_{34}-x_{23}x_{14}$ in the polynomial ring $K[x_{12},x_{34},x_{23},x_{14}]$. The variable $x_{ij}$ corresponds to the edge $e_{ij}$ from vertex $i$ to vertex $j$. The walk associated to $x_{12}x_{34}-x_{23}x_{14}$ is the cycle itself -- with $e_{12}$ and $e_{34}$ being its odd edges, $e_{23}$ and $e_{14}$ being its even edges.
\end{example}

In \cite{rees}
and \cite{thmin}
the circuits and primitive elements of toric graph ideals are characterized. We use this results to provide the mentioned exponential bound.

In \cite{rees}
Villareal gave the following description of circuits in toric graph ideal (Proposition 4.2 in the paper):

\begin{theorem}\label{vill}
Let $G$ be a graph. The binomial $B\in I_G$ is a circuit if and only if $B=B_w$ and one of the following holds:
\begin{enumerate}
 \item $w$ is an even cycle;
 \item $w$ consists of two odd cycles with common vertex;
 \item $w$ consists of two odd cycles connected by a path.
\end{enumerate}
\end{theorem}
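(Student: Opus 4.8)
The plan is to pass from the binomial to its supporting subgraph and reduce the circuit condition to a statement about linear dependence of the incidence vectors $a_e$. A binomial $B=x^{u^+}-x^{u^-}\in I_G$ corresponds to $u=u^+-u^-\in\ker_{\mathbb Z}A$, and its support is the set $H$ of edges $e$ with $u_e\neq 0$; being a circuit means precisely that $\{a_e:e\in H\}$ is a minimal linearly dependent set, i.e.\ it is dependent (some nonzero $u$ is supported on it) while every proper subset is independent. First I would record two structural consequences of this minimality. If $H$ were disconnected as a graph, then the relation $Au=0$, being a separate condition at each vertex, would split $u$ into nonzero pieces supported on the components, contradicting minimality; hence $H$ is connected. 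If $H$ had a vertex of degree one, the equation at that vertex would force the weight of its unique incident edge to vanish, contradicting membership in the support; hence every vertex of $H$ has degree at least two.

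Next I would invoke the rank of the unsigned incidence matrix of a connected graph, which equals $|V(H)|$ when $H$ is non-bipartite and $|V(H)|-1$ when $H$ is bipartite. Since a minimal dependent set has cardinality one more than its rank, this yields $|E(H)|=|V(H)|$ in the bipartite case and $|E(H)|=|V(H)|+1$ in the non-bipartite case, i.e.\ the first Betti number of $H$ is $1$ or $2$ respectively. Combined with minimum degree at least two, the bipartite case forces $H$ to be a single cycle (a connected graph with as many edges as vertices and no vertex of degree one is a single cycle), which is even by bipartiteness, giving case (1).

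In the non-bipartite case $H$ has Betti number two and minimum degree at least two; suppressing degree-two vertices produces a connected multigraph in which every vertex has degree at least three, and a short degree count (from $\sum\deg=2|E|=2(|V|+1)$ and $\deg\ge 3$ one gets $|V|\le 2$) shows it must be one of three topological types: a bouquet of two loops at one vertex, a theta graph (two vertices joined by three internal paths), or two loops joined by an edge. Un-suppressing, these are two cycles sharing a vertex, a theta subdivision, and two cycles joined by a path. The decisive point, which I expect to be the main obstacle, is eliminating the theta graph and fixing the parities: here minimality does the work, since any even cycle contained in $H$ would be a dependent proper subset. In a theta graph the three cycles have lengths $\ell_1+\ell_2$, $\ell_2+\ell_3$, $\ell_1+\ell_3$, whose sum is even, so they cannot all be odd; a theta graph therefore always contains an even cycle and is excluded. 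The same principle forces both cycles of the remaining two types to be odd, giving cases (2) and (3).

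Finally I would verify the converse, that each configuration really is a circuit, by exhibiting the associated even closed walk and the induced kernel vector $u_e=(\text{odd occurrences of }e)-(\text{even occurrences of }e)$ explicitly: alternating $\pm 1$ around an even cycle; traversing the two odd cycles consecutively through the common vertex; and traversing one odd cycle, crossing the connecting path, traversing the second odd cycle, and returning along the path, so that the path edges acquire weight $\pm 2$. A direct check of the vertex equations confirms that each vector lies in $\ker A$ with full support on $H$, so $H$ is dependent. Minimality follows because deleting any edge turns each configuration into an odd cycle with attached paths, or into a path, whose incidence vectors admit no nonzero relation; hence no proper subset of $H$ is dependent and $B_w$ is a circuit.
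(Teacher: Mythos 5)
Your proof is correct, but note that the paper does not prove this statement at all: it is quoted verbatim from Villarreal (Proposition 4.2 of \cite{rees}), so there is no in-paper argument to compare against. What you supply is the standard self-contained route through the vector matroid of the incidence matrix: identify circuits of $I_G$ with minimal linearly dependent sets of edge-incidence vectors, deduce connectedness and minimum degree two from minimality, use the rank formula for the unsigned incidence matrix ($|V|-1$ if bipartite, $|V|$ if not) to pin the first Betti number of the support at $1$ or $2$, classify the Betti-number-two homeomorphism types by the degree count after suppressing degree-two vertices, and kill the theta graph and the even-cycle parities by observing that any even cycle is itself a dependent set, hence cannot occur as a proper subgraph of a minimal one. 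All of these steps check out, including the two details that are easiest to fumble: the parity argument that a theta graph always contains an even (proper) cycle because the three cycle lengths sum to an even number, and the reduction of minimality in the converse to single-edge deletions (any dependent proper subset would sit inside $H\setminus\{e\}$ for some $e$). The only points you lean on without proof are standard: the equivalence of minimal-support binomials with matroid circuits (Sturmfels, \cite{poly}) and the rank of the unsigned incidence matrix; both are legitimate to cite. This is, as far as the structure of the argument goes, essentially the proof one finds in the literature, so the theorem can be considered fully justified by your writeup.
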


Note that in the third case, the degree of the circuit equals $\frac{c_1+c_2}2+p$, where $c_1$ and $c_2$ are the sizes of the cycles and $p$ is the length of the path (we'll use it later in the proof of Theorem \ref{final}).

Then in \cite{thmin}
Reyes, Tatakis and Thoma gave a complete description of elements of Graver basis (primitive elements) of the toric ideal of a graph (Corollary 3.2 in the paper):
\begin{theorem}\label{thom}
Let $G$ be a graph. A connected subgraph $W$ of $G$ is an underlying graph of a primitive walk $w$ (i.e. $B_w$ is primitive) if and only if all the following conditions are satisfied:
\begin{enumerate}
 \item every block of $W$ is a cycle or a cut edge;
 \item every cut vertex of $W$ belongs to exactly two blocks and separates the graph in two parts, each of them containing an odd number of edges in cyclic blocks.
\end{enumerate}

\end{theorem}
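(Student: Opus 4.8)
The plan is to reformulate primitivity in terms of \emph{conformal decompositions} of the integer vector attached to the walk, and then to read the block structure of $W$ off this reformulation. Write $B_w=x^{u^+}-x^{u^-}$ with $u=u^+-u^-\in\ker_{\Z}A$, where $A$ is the incidence matrix of $G$, so that $\ker_{\Z}A$ consists of the integer edge-vectors that are balanced at every vertex. The coordinate $u_e$ equals the number of odd-position minus even-position occurrences of $e$ in $w$, and $B_w$ is primitive exactly when $u$ is a Graver element: there is no $v\in\ker_{\Z}A$ with $v\neq 0$, $v\neq u$, $v^+\le u^+$ and $v^-\le u^-$ coordinatewise. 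Such a $v$ is the same thing as a proper even closed subwalk assembled from a sub-multiset of the edge-occurrences of $w$ that is still balanced at every vertex. The organising principle is that an odd cycle, under any $\pm1$ signing, is never balanced, whereas an even cycle with alternating signs is; hence a separated piece of $W$ can be ``closed off'' into a balanced subvector precisely when it carries an even number of odd cycles. I then prove both implications by manufacturing, respectively forbidding, such a conformal $v$.

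\emph{Necessity.} Assume $u$ is primitive. If some block of $W$ were $2$-connected but not a cycle it would contain a vertex of degree at least three, hence a theta subgraph (two vertices joined by three internally disjoint paths); among the three cycles formed by pairs of these paths at least two paths share length-parity, so at least one cycle is even, and with the signing inherited from $w$ it is a balanced subvector conformal to $u$, proper because the third path is unused — contradicting primitivity. Thus (1) holds. For (2), suppose first a cut vertex $x$ lay in at least three blocks, so $W-x$ has components $C_1,\dots,C_k$ with $k\ge 3$. If some $C_i$ carries an even number of odd cycles it closes into a proper even closed subwalk through $x$; otherwise every $C_i$ carries an odd number, and then any two of them together carry an even number and close up jointly through $x$, leaving at least one $C_j$ unused. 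Either way a proper conformal $v$ appears, so each cut vertex lies in exactly two blocks. The same dichotomy applied to the two sides of such a cut vertex shows that a side carrying an even number of edges in cyclic blocks (equivalently, an even number of odd cycles) closes off properly; hence both sides carry an odd number, which is the parity assertion in (2).

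\emph{Sufficiency.} Conversely, suppose $W$ satisfies (1) and (2). Then $W$ is a tree of blocks, each a cycle or a cut edge, glued only at cut vertices each lying in exactly two blocks, subject to the odd-parity condition. Form the multigraph $\widehat W$ by doubling every cut edge and keeping each cycle edge simple. Every vertex of $\widehat W$ has even degree, and its number of edges is even (the cyclic edges split as the two odd numbers on the sides of any cut vertex, and doubling adds an even number), so $\widehat W$ admits a closed Eulerian walk; choosing the traversal suitably gives an even closed walk $w$ with underlying graph $W$, whose alternating labelling determines $B_w$. To see $B_w$ is primitive, take any conformal $v\le u$: the degree-two vertices interior to each block force all its edges to share one ``in/out'' status in $v$, and the balance equation at each cut vertex, read with the signs that the traversal imposes, forces the two incident blocks to share status. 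Since the block-graph is connected the status is global, so $v\in\{0,u\}$; the only place the propagation could break is where a separated part carries an even number of odd cycles, which (2) forbids. Hence $B_w$ is primitive.

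\emph{Main obstacle.} The delicate steps are precisely those involving the \emph{signs} rather than the bare incidences. In the necessity proof one must check that the balanced subvector extracted from a theta subgraph, or from a union of components, is genuinely conformal to $u$ (agrees in sign with $u$ on its support); in the sufficiency proof one must arrange the Eulerian traversal so that every cut edge is used twice with the \emph{same} parity while every even cycle is traversed with a \emph{twisted}, non-alternating signing — this twisting is exactly what stops a middle even cycle from splitting off as its own balanced subvector. Making the balance/propagation bookkeeping precise, and thereby proving that the odd-parity condition (2) is equivalent to the non-existence of any conformal decomposition, is the combinatorial heart of the argument; the reductions above are otherwise routine.
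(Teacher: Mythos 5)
First, a point of comparison: the paper does not prove Theorem \ref{thom} at all --- it is quoted from Reyes, Tatakis and Thoma \cite{thmin} (Corollary 3.2 there, with Theorem 3.1 supplying the edge multiplicities), so there is no in-paper proof to measure yours against; your attempt has to stand on its own. Judged that way, you set up the right framework (primitivity of $B_w$ as the non-existence of a nonzero conformal $v\in\ker_{\Z}A$ with $v\neq u$), and your guiding parity heuristic --- a piece of $W$ can be closed off into a balanced subvector exactly when it carries an even number of odd cycles --- is the correct one. But the argument has genuine gaps exactly where you flag them, and at least one step is false as written. In the necessity of (1) you take the even cycle formed by two of the three paths of a theta subgraph and claim that ``with the signing inherited from $w$'' it is balanced and conformal to $u$. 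The inherited signing is just the restriction of $u$ to those edges, and balance of that restriction at the two branch vertices would require the two cycle edges there to carry opposite signs in $u$; since those vertices have degree at least $3$ in the block, the balance equation for $u$ at them involves the third path as well, and nothing forces the restriction to be balanced. The object that \emph{is} balanced is the alternating $\pm 1$ signing of the even cycle, but that one need not be conformal to $u$. The same unproved conformality claim underlies every ``closes off through $x$'' step in your necessity argument for (2).

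In the sufficiency direction you build a closed Eulerian walk on the doubled multigraph and assert that the traversal can be arranged so that each cut edge is used twice with equal parity, each internal even cycle receives a non-alternating signing (otherwise that cycle alone is a conformal kernel element and primitivity fails), and that any conformal $v$ propagates along the block tree to $0$ or $u$. None of this is carried out, and you yourself call it ``the combinatorial heart of the argument''; that heart is precisely what is missing, since without the sign bookkeeping the equivalence between condition (2) and the non-existence of conformal decompositions is not established. A further caution: a conformal $v$ need not arise from a single closed subwalk --- its support may be a disjoint union of closed walks --- which matters when you try to exclude it by propagation across cut vertices. So what you have is a reasonable plan rather than a proof; completing it would require supplying exactly the sign/conformality and walk-surgery arguments that constitute the proof in \cite{thmin}.
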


They also prove that every cut edge of $W$ appears in the $w$ exactly twice and every other edge $W$ appears in $w$ exactly once (Theorem 3.1 in the paper).

\begin{remark}
 Note that if $G$ is not simple, then the cycles can have length 1 (a loop) or 2 (two edges with the same endpoints). Then the above theorems are still true (see Remark 4.17 in \cite{thmin}) as well as the theorem proved below. It should be however noted that in case of a loop, the corresponding entry of the incidence matrix should be equal to 2, not 1.
\end{remark}

Now we prove the following

\begin{theorem}\label{final}
Let $G$ be a graph. Suppose that the degree of every circuit in $I_G$ is bounded above by $n$. Then the degree of any primitive element in $I_G$ is bounded above by $n^2 e^{\frac {2n}e}$.
\end{theorem}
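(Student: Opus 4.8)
The plan is to read off the degree of a primitive binomial directly from the combinatorial structure of its underlying graph $W$ given by Theorem \ref{thom}, and then to bound that structure using the circuit classification of Theorem \ref{vill} together with the hypothesis that every circuit has degree at most $n$. Write $c$ for the total number of edges lying in cyclic blocks of $W$ and $b$ for the number of cut edges. Recalling that each cut edge is traversed twice by $w$ while every other edge is traversed once, the length of $w$ is $c+2b$, so $\deg B_w = c/2 + b$, and everything reduces to bounding $c$ and $b$. First I would record the shape of $W$: since by condition (1) every block is a cycle or a cut edge and by condition (2) every cut vertex lies in exactly two blocks, the blocks of $W$ form a tree $T$ whose edges are the cut vertices. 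I would then observe that every leaf of $T$ must be an odd cycle: a leaf block meets the rest of $W$ in a single cut vertex, which by condition (2) separates off a part whose number of cyclic-block edges equals the leaf's own edge count and must be odd, so a cut edge (count $0$) is excluded and an odd cycle is forced.

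Next I would bound the length of every cycle of $W$ by $2n$. An even cycle of $W$ is itself a circuit of type (1) by Theorem \ref{vill}, hence has length at most $2n$. For an odd cycle $C$ I would use that, whenever $W$ has a cut vertex, $T$ has at least two leaves and so $W$ contains a second odd cycle $C'$; routing a simple path from $C$ to $C'$ through the intervening blocks (taking one arc of each cycle met on the way) exhibits a circuit of type (2) or (3), whose degree $(\ell_C+\ell_{C'})/2 + p \le n$ forces $\ell_C < 2n$. The only remaining case, $W$ a single block, is a single even cycle, covered by the type (1) bound. The same pairing idea bounds runs of cut edges: a maximal path of cut edges lies between two cycles, which extend outward to odd cycles on either side, and the resulting type (3) circuit has degree at least the length of the run, so each run has length at most $n$; since the cycle blocks, after contracting these runs, again form a tree, this yields $b \le n\cdot(\#\text{cycle blocks})$.

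It then remains to bound the number of blocks, where I expect the real work and the exponential factor to appear. I would root $T$ at a leaf (odd) cycle $R$. Pairing $R$ with any leaf $L$ gives a type (3) circuit whose path component has length at most $n$, so $T$ has depth $O(n)$; on the other hand a cycle of length $\ell \le 2n$ carries at most $\ell$ cut vertices and hence branches into at most $\ell$ subtrees. The delicate point is that these bounds are coupled: the circuit degree counts, along a root-to-leaf path, the arcs actually traversed through the intermediate cycles, so a block of length $\ell$ simultaneously costs about $\ell/2$ against the budget $n$ and multiplies the number of descendant leaves by about $\ell$. Optimising a tree subject to the constraint that each root-to-leaf path spends total cost at most $n$ at rate $\ell/2$ per cycle while branching by $\ell$ leads to maximising $\ell^{2n/\ell}$ over admissible lengths; since $\ell^{2n/\ell} = \exp\!\big(2n\,(\ln \ell)/\ell\big)$ and $(\ln \ell)/\ell$ attains its maximum $1/e$ at $\ell=e$, the number of leaves is at most $e^{2n/e}$. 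Feeding $\ell_C \le 2n$, the depth bound, and this leaf count back into $c/2+b$, and tracking the constants, should produce $\deg B_w \le n^2 e^{2n/e}$.

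The main obstacle is exactly the counting in the last paragraph: turning the qualitative trade-off between a cycle's length (which controls branching) and its contribution to the circuit-degree budget (which controls depth) into a rigorous upper bound on the number of blocks, and in particular justifying that the per-level cost may be taken proportional to $\ell$ even though an individual root-to-leaf path only traverses an arc of each intermediate cycle. Getting this accounting right — rather than the cruder depth-times-branching estimate $(2n)^{O(n)}$ — is what produces the sharp base $e^{2/e}$ and hence the stated bound $n^2 e^{2n/e}$.
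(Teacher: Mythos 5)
Your structural analysis coincides with the paper's: the block tree, the observation that every leaf block is an odd cycle, the construction of circuits from leaf-to-leaf paths through the intermediate blocks, and the resulting bound of $2n$ on the size of every cycle block are exactly the steps of the published proof. The genuine gap is the one you flag yourself: the passage from ``along each root-to-leaf path the block sizes sum to at most about $2n$, while a block of size $\ell$ branches into at most $\ell$ children'' to ``the tree has at most $e^{2n/e}$ leaves.'' Maximising $\ell^{2n/\ell}$ only treats a tree in which every block has the same size $\ell$, and even then it computes the leaf count of one particular tree rather than proving an upper bound over all admissible trees with heterogeneous block sizes; as written this is a heuristic for the answer, not a proof of it, and without it you obtain no bound at all.

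The paper closes exactly this gap with a self-contained lemma (Lemma \ref{lem}) whose two ingredients are what your sketch is missing. First, for a root-to-leaf path $(u_0,\dots,u_s)$ with $\sum_{i<s}\deg(u_i)\le M$, the arithmetic--geometric mean inequality gives $\prod_{i<s}\deg(u_i)\le (M/s)^s$, and maximising $s\mapsto (M/s)^s$ yields the uniform bound $e^{M/e}$ \emph{whatever} the distribution of degrees along the path --- this is what legitimises your ``all $\ell$ equal to $e$'' optimisation in the general case. Second, to convert a bound on the product of degrees along each path into a bound on the number of leaves, the paper uses the Kraft-type identity $\sum_{\text{leaves}}\bigl(\deg(u_0)\prod_{0<i<s}(\deg(u_i)-1)\bigr)^{-1}=1$ (the probability of reaching each leaf by a uniform random descent from the root), which gives $|P|\le\max_{\text{paths}}\prod_{i<s}\deg(u_i)\le e^{M/e}$. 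Applied with $M=2n-2$ (using $\deg(v)\le S(v)$ and $\sum_i S(v_i)\le 2n$ along any leaf-to-leaf path), this bounds the number of blocks by $n e^{2n/e}$; since each block contributes at most $2n$ to the length of $w$ (cut edges counted twice), the bound $n^2 e^{2n/e}$ follows. You need to supply these two steps, or an equivalent induction on the tree, to complete your argument; your remaining bookkeeping ($c/2+b$, runs of cut edges of length at most $n$) is sound but also loses a factor of $2$ relative to the paper's simpler per-block accounting, which you would need to recover when ``tracking the constants.''
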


With use of Theorems \ref{vill} and \ref{thom} we give a purely graph-theoretic proof. We begin with a lemma.

\begin{lemma}\label{lem}
 Let $T$ be a tree with at least three vertices. Suppose that for every path $(v_0,v_1,\dots,v_k)$ connecting leaves $v_0$ and $v_k$ in $T$ we have $\sum_{i=1}^{k-1} \deg(v_i)\le M$. Then $T$ has at most $\left(\frac M2+1 \right)e^{\frac Me}$ vertices.
\end{lemma}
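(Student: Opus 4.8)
The plan is to root $T$ at an arbitrary internal (non-leaf) vertex $r$, which exists because $T$ has at least three vertices, and then to control separately three quantities: the number $L$ of leaves of $T$, the height $h$ (the largest depth of a vertex measured from $r$), and the number of vertices at each fixed depth. The counting is then elementary: in a rooted tree, two vertices at the same depth are incomparable and hence have disjoint sets of descendant leaves, and every vertex has at least one descendant leaf, so each depth contains at most $L$ vertices. Since the depths range over $0,1,\dots,h$, this already gives that $T$ has at most $(h+1)L$ vertices. It therefore suffices to prove $h\le \frac M2$ and $L\le e^{\frac Me}$.

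The heart of the argument, and the step I expect to be the main obstacle, is the exponential bound $L\le e^{\frac Me}$. For a vertex $u$ let $f(u)$ be the number of leaves in the subtree rooted at $u$; then $f(u)=\sum_w f(w)$ over the children $w$ of $u$, so $f(u)\le \deg(u)\cdot\max_w f(w)$. Repeatedly descending to a heaviest child from $r$ down to some leaf $\ell$ along a path $r=u_0,u_1,\dots,u_t=\ell$ yields $L=f(r)\le \prod_{i=0}^{t-1}\deg(u_i)$, a product of degrees of internal vertices along a single descending path. The crucial point is that this path can be completed to a genuine leaf-to-leaf path: since $r$ is internal it has a second subtree containing some leaf $\ell'$, and the path from $\ell'$ through $r$ to $\ell$ is simple and has $u_0,\dots,u_{t-1}$ among its interior vertices, whence $\sum_{i=0}^{t-1}\deg(u_i)\le M$ by hypothesis. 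With $t$ positive terms summing to at most $M$, the AM--GM inequality gives $\prod_{i=0}^{t-1}\deg(u_i)\le \left(\frac Mt\right)^t$, and maximizing $\left(\frac Mt\right)^t$ over $t>0$ (the maximum occurring at $t=\frac Me$) gives exactly $e^{\frac Me}$. The delicate points are ensuring the completed path is simple and that the one-armed degree sum is genuinely bounded by $M$, not by something larger.

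The bound $h\le \frac M2$ follows from the same completion trick applied to a deepest leaf: if $\ell$ lies at depth $h$, the path from $r$ to $\ell$ contributes $h$ interior vertices, each of degree at least $2$; completing it through a second subtree of $r$ into a leaf-to-leaf path and applying the hypothesis gives $2h\le M$. Combining the three ingredients, $T$ has at most $(h+1)L\le \left(\frac M2+1\right)e^{\frac Me}$ vertices, as claimed. Throughout, the only structural facts used are that $r$ has at least two subtrees (because it is internal) and that every interior degree is at least $2$, so the argument should go through without any finer analysis of the shape of $T$.
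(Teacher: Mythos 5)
Your proof is correct, and its overall skeleton matches the paper's: root $T$ at an internal vertex, show the number of leaves is at most $e^{\frac Me}$, show that any root-to-leaf path has at most $\frac M2$ internal vertices, and multiply. The genuinely different ingredient is how you count leaves. The paper bounds \emph{every} root-to-leaf degree product $\prod_{i=0}^{s-1}\deg(u_i)$ by $e^{\frac Me}$ (via the same completion-through-a-second-subtree trick and the same AM--GM optimization you use) and then converts this into a bound on the \emph{number} of such paths through the Kraft-type identity
$$\sum_{(u_0,u_1,\dots,u_s)}\frac{1}{\deg(u_0)\prod_{i=1}^{s-1}(\deg(u_i)-1)}=1,$$
interpreted as the total probability of a uniformly random descent from the root. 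You instead run the extremal recursion $f(u)\le\deg(u)\cdot\max_w f(w)$ down a heaviest-child path, which pins the entire leaf count under a \emph{single} degree product and invokes the hypothesis only along that one path; this avoids the probabilistic identity altogether and is the more elementary, self-contained route. Your final assembly ($(h+1)L$ by counting levels, using disjointness of descendant-leaf sets at a fixed depth) also differs slightly from the paper's (each leaf has at most $\frac M2$ ancestors and every non-leaf is an ancestor of some leaf), but the two are equivalent in strength and both yield $\left(\frac M2+1\right)e^{\frac Me}$. All the delicate points you flag --- simplicity of the completed path, $t\ge 1$ so AM--GM applies, and the validity of $\left(\frac Mt\right)^t\le e^{\frac Me}$ for all $t>0$ --- check out.
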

\begin{proof}
 Choose any vertex $u$ of $T$ that is not a leaf and make $T$ rooted with $u$ being the root. Let $P$ be the set of all paths $(u=u_0,u_1,\dots,u_s)$ from $u$ to a leaf $u_s$.
 Let $(u_0,u_1,\dots,u_s)\in P$. As $u$ is not a leaf, there exists another path $(u'_0,u'_1,\dots,u'_t)\in P$ such that $u'_1\neq u_1$. Therefore we get a path
 $(u'_t,u'_{t-1},\dots,u'_1,u,u_1,\dots,u_s)$ connecting two leaves. Note that none of the ends of the path is equal to $u$, as $u$ is not a leaf. Thus from the assumption we have
 $$\sum_{i=0}^{s-1} \deg(u_i)+\sum_{i=1}^{t-1} \deg(u'_i)\le M.$$
 Omitting the second term, we get
 \begin{equation}
  \label{simp}
  \sum_{i=0}^{s-1} \deg(u_i)\le M\quad\text{for every}\quad (u_0,u_1,\dots,u_s)\in P.
 \end{equation}
 Then it follows from the arithmetic-geometric mean inequality that
 $$\left(\prod_{i=0}^{s-1}\deg(u_i)\right)^{1/s}\le \frac{\sum_{i=0}^{s-1} \deg(u_i)}s\le \frac Ms.$$
 We get 
 $$\prod_{i=0}^{s-1}\deg(u_i)\le \left(\frac Ms\right)^s=e^{s\ln M-s\ln s}.$$
 By differentiating the exponent (as a function of $s$), we find out that it attains its maximal value $\frac Me$ at $s=\frac Me$. Thus
 \begin{equation}
  \label{exp}
 \prod_{i=0}^{s-1}\deg(u_i)\le e^{\frac Me}.
 \end{equation}
 Now note that
 \begin{equation}
  \label{prod}
\sum_{(u_0,u_1,\dots,u_s)\in P} \frac{1}{\prod_{i=0}^{s-1} \deg(u_i)}\le \sum_{(u_0,u_1,\dots,u_s)\in P} \frac{1}{\deg(u_0)\prod_{i=1}^{s-1}(\deg(u_i)-1)}=1.
 \end{equation}
 The equality above is a known identity --- for a path $(u_0,u_1,\dots,u_s)\in P$ the number 
 $$\frac{1}{\deg(u_0)\prod_{i=1}^{s-1}(\deg(u_i)-1)}$$
 is a probability that going down from the root and choosing the next vertex at random (with uniform probability) at each stage, we end up in $u_s$. Now combining \eqref{exp} and \eqref{prod} we get
 $$|P| \le \sum_{(u_0,u_1,\dots,u_s)\in P} \frac{e^{\frac Me}}{\prod_{i=0}^{s-1}\deg(u_i)}\le e^{\frac Me}.$$
 As $T$ is a tree, every leaf is connected to root by exactly one path, so $|P|$ equals the number of leaves.
 
 From \eqref{simp} we conclude that every leaf has at most $\frac M2$ ancestors, as their degrees are not less than 2 and the sum of their degrees is at most $M$. Every vertex of $T$ that is not a leaf is an ancestor of a leaf (possibly more than one), so there are at most $\frac M2 \cdot |P|$ non-leaves in $T$. It follows that the overall number of vertices in $T$ is at most
 $$\left(\frac M2+1\right)|P|\le \left(\frac M2+1 \right)e^{\frac Me}.$$
\end{proof}

Now we proceed to the proof of the theorem.

\begin{proof}[Proof of Theorem \ref{final}]
 Let $w$ be a primitive walk in the graph $G$ and let $W$ be the underlying subgraph of $w$.
 Consider the block-graph $B(W)$ of $W$ --- a graph whose vertices are the blocks (biconnected components) of $W$ and two blocks are adjacent in $B(W)$ if and only if they share a common vertex (a cut vertex) in $W$. Every block of $B(W)$ is a complete graph of blocks of $W$ sharing a common vertex (see Corollary 1b in \cite{harary}). Thus the second condition from Theorem \ref{thom} implies that all blocks of $B(W)$ are single edges. We conclude that $B(W)$ contains no cycle (because a cycle is biconnected and contains more than one edge). As $w$ is a walk, $W$ is connected and $B(W)$ is connected and therefore it is a tree. For any vertex $v$ of $B(W)$ let $S(v)$ denote its size (i.e. the number of vertices in the corresponding block).
 
 If $B(W)$ has no more than two vertices, $w$ is a circuit (this is an immediate consequence of Theorems \ref{vill} and \ref{thom}) and the conclusion follows. From now on, suppose that $B(W)$ has at least three vertices. The second condition from Theorem \ref{thom} implies that every leaf of $B(W)$ corresponds to on odd cycle in $W$. Therefore for every path $(v_0,v_1,\dots,v_k)$ connecting two leaves of $B(W)$ there exists a circuit which contains two odd cycles ($v_0$ and $v_k$) connected by a path going through $v_1$, $v_2$, \dots, $v_{k-1}$. Moreover, we can construct a circuit whose degree is not less than
 $\frac{\sum_{i=0}^{k} S(v_i)}2,$
 i.e. the length of the path is not less than $\frac{\sum_{i=1}^{k-1} S(v_i)}2$. Indeed, in every block $v_i$ (for $i=1$, $2$, \dots, $k-1$) we have to choose a path from the common vertex of $v_i$ and $v_{i-1}$ to the common vertex of $v_i$ and $v_{i+1}$. As the block is a cycle of size $S(v_i)$ (or an edge for $S(v_i)=2$), we can choose the longer path between those two vertices, whose length is at least $\frac {S(v_i)}2$. On the other hand, from the assumption we know that the degree of the constructed circuit is not greater than $n$. Thus we have
 $$\frac{\sum_{i=0}^{k} S(v_i)}2\le n.$$
 
 Let $\deg(v)$ denote the degree of a block $v$ of $W$ as a vertex of $B(W)$. From Theorem \ref{thom} we know that every cut vertex of a block $v$ is a vertex of exactly one other block, so $v$ has a common point with at most $S(v)$ other blocks, i.e. $\deg(v)\le S(v)$. It follows that
 $$\sum_{i=1}^{k-1} \deg(v_i)=\sum_{i=0}^{k} \deg(v_i)-2\le \sum_{i=0}^{k} S(v_i)-2\le 2n-2$$
 for any path $(v_0,v_1,\dots,v_k)$ as above. Thus we can apply Lemma \ref{lem} for $T=B(W)$ and $M=2n-2$. Then it states that $B(W)$ has at most $ne^{\frac {2n}e}$ vertices.
 
 Now observe that every block in $W$ has at most $2n$ edges (counted with multiplicities in $w$, i.e. every cut edge is counted twice). It's obvious for edges and for even cycles (as they are circuits themselves). If a block is an odd cycle, we can again construct a path from it to another odd cycle (some leaf of $B(W)$) and we get a circuit which by assumption has to have at most $2n$ edges. As there are at most $ne^{\frac{2n}e}$ blocks, the total length of $w$ is at most $2n^2 e^{\frac{2n}e}$ and the degree of $B_w$ is the half of this length, so it's not greater than $n^2e^{\frac{2n}e}$.
 
\end{proof}

\section{Further remarks}
It would be interesting to solve the following
\begin{problem}\label{prob}
 Is the degree of any primitive element of a toric ideal bounded by a function of the maximal true degree of a circuit?
\end{problem}

We provided the positive answer for toric graph ideals only. In \cite{thgrav} toric graph ideals were used as counterexamples to polynomial bounds. It was able because of the pictorial description of their minimal binomials, given in \cite{rees} and \cite{thmin}. Theorem \ref{final} shows that an evidence of possible negative answer to Problem \ref{prob} cannot come from a graph ideal. It would be interesting to solve Problem \ref{prob} for \emph{toric hypergraph ideals} at least. They are exactly the toric ideals associated to 0--1 matrices (see \cite{soncom}, \cite{sonalg}, \cite{bouq} for the strict definition and some results on toric hypergraph ideals). An important step would be to provide a complete characterization of their primitive elements and circuits in spirit of Theorems \ref{vill} and \ref{thom}. This is still an open problem.

\section{Acknowledgements}
I am thankful to Apostolos Thoma for introducing me to toric graph ideals, fruitful discussion and pointing out the mistakes.

\end{document}